\newtheorem{thm}  {Theorem} [section]
\newtheorem{cri} [thm] {Criterion}
\newtheorem{lem} [thm] {Lemma}
\theoremstyle{definition} 
\newtheorem{dfn} [thm] {Definition}
\newtheorem{rmk} [thm] {Remark}
\newtheorem{exm} [thm] {Example}
\newtheorem*{prob} {Problem}
\numberwithin{equation}{section}
\crefname{thm}{Theorem}{Theorems}
\crefname{lem}{Lemma}{Lemmas}
\crefname{prop}{Proposition}{Propositions}
\crefname{dfn}{Definition}{Definitions}
\crefname{rmk}{Remark}{Remarks}
\crefname{exm}{Example}{Examples}
\crefname{cri}{Criterion}{Criteria}
\DeclarePairedDelimiterX\set[1]\lbrace\rbrace{\,\def\given{\;\delimsize\vert\;}#1\,} 
\DeclareMathOperator{\Epi}{Epi}
\DeclareMathOperator{\Hom}{Hom}
\DeclareMathOperator{\im}{im}
\newcommand{\un}{\mathrm{un}}
\newcommand{\FF}{\mathbb{F}}
\newcommand{\ZZ}{\ensuremath{\mathbb{Z}}}
\newcommand{\QQ}{\ensuremath{\mathbb{Q}}}
\newcommand{\cK}{\ensuremath{\mathcal{K}}}
\newcommand{\cH}{\ensuremath{\mathcal{H}}}
\newcommand{\size}[1]{\ensuremath{\lvert #1 \rvert}}
\begin{document}

\title[The isomorphism problem for group algebras]{The isomorphism problem for group algebras:\\ a criterion}
\author[T.~Sakurai]{\href{https://orcid.org/0000-0003-0608-1852}{Taro Sakurai}}
\address{Department of Mathematics and Informatics, Graduate School of Science, Chiba University, 1-33, Yayoi-cho, Inage-ku, Chiba-shi, Chiba, 263-8522 Japan}
\email{tsakurai@math.s.chiba-u.ac.jp}

\keywords{%
	modular isomorphism problem, %
	abelian $p$-group, %
	class two and exponent $p$, %
	class two and exponent four, %
	counting homomorphisms, %
	quasi-regular group%
}
\subjclass[2010]{%
	\href{https://zbmath.org/classification/?q=20C05}{20C05} (%
	\href{https://zbmath.org/classification/?q=16N20}{16N20},
	\href{https://zbmath.org/classification/?q=16U60}{16U60},
	\href{https://zbmath.org/classification/?q=20D15}{20D15},
	\href{https://zbmath.org/classification/?q=20C20}{20C20})}
\date{\today}
\begin{abstract}
	\noindent 
	Let $R$ be a finite unital commutative ring.
	We introduce a new class of finite groups, which we call \emph{hereditary groups} over~$R$.
	Our main result states that if $G$ is a hereditary group over $R$ then a unital algebra isomorphism between group algebras $RG \cong RH$ implies a group isomorphism $G \cong H$ for every finite group $H$.

	As application, we study the modular isomorphism problem, which is the isomorphism problem for finite $p$-groups over $R = \mathbb{F}_p$ where $\mathbb{F}_p$ is the field of $p$~elements.
	We prove that a finite $p$-group~$G$ is a hereditary group over $\mathbb{F}_p$ provided $G$ is abelian, $G$ is of class two and exponent~$p$ or $G$ is of class two and exponent four.
	These yield new proofs for the theorems by Deskins and Passi-Sehgal.
\end{abstract}

\maketitle

\tableofcontents

\section*{Introduction}
\noindent 
Let \( R \) be a unital commutative ring\footnote{Throughout this paper, we do not impose a ring (or algebra) to have~\( 1 \) but we impose \( 0 \neq 1 \) if it has~\( 1 \).
} and \( G \) a finite group.
Structure of the group algebra~\( RG \) of~\( G \) over~\( R \) reflects structure of the group~\( G \) to some extent.
The \emph{isomorphism problem} asks whether a group algebra~\( RG \) determines the group~\( G \) under a different setting.
See a survey by Sandling~\cite{San85} for a historical account for this problem.
In what follows, \( p \) stands for a prime number and \( \FF_p \) denotes the field of \( p \)~elements.
The only classic problem that is still open for more than 60 years is the \emph{modular isomorphism problem}. 
\begin{prob}
	Let \( G \) and \( H \) be finite \( p \)-groups.
	Does a unital algebra isomorphism \( \FF_pG \cong \FF_pH \) imply a group isomorphism \( G \cong H \)?
\end{prob}
Several positive solutions for special classes of \( p \)-groups are known;
See lists in the introduction of \cite{EK11} or \cite{HS06}.

In this paper, we introduce a new class of finite groups, which we call \emph{hereditary groups} over a finite unital commutative ring~\( R \) (\cref{dfn:hereditary group}).
Our main result (\cref{cri:iso}) states that if \( G \) is a hereditary group over~\( R \) then a unital algebra isomorphism \( RG \cong RH \) implies a group isomorphism \( G \cong H \) for every finite group~\( H \).
The proof rests on counting homomorphisms (\cref{lem:hom}) and adjoint (\cref{lem:adjoint})---this indirect approach is novel in the sense that it does not involve normalized isomorphisms, for example.
As application, we study the modular isomorphism problem.
We prove that a finite \( p \)-group~\( G \) is a hereditary group over \( \FF_p \) provided \( G \) is abelian (\cref{lem:abelian p-groups}), \( G \) is of class two and exponent~\( p \) (\cref{lem:exp p}) or \( G \) is of class two and exponent four (\cref{lem:exp 4}).
These yield new proofs for the theorems by Deskins (\cref{thm:Deskins}) and Passi-Sehgal (\cref{thm:Passi-Sehgal-odd,thm:Passi-Sehgal-even}) which are early theorems on the modular isomorphism problem.

\section{Criterion}
This section is devoted to proving our main result (\cref{cri:iso}).
The first lemma is an easy application of the inclusion-exclusion principle.
\begin{lem}
	\label{lem:epi}
	Let \( G \) and \( H \) be finite groups.
	We denote by \( \Epi(G, H) \) the set of all epimorphisms from~\( G \) to~\( H \).
	Let \( \cH \) be the set of all maximal subgroups of~\( H \).
	Then
	\begin{equation*}
		\size{\Epi(G, H)} =
		\sum_{\cK \subseteq \cH} (-1)^{\size{\cK}} \size{\Hom(G, \bigcap_{K \in \cK} K)}.
	\end{equation*}
\end{lem}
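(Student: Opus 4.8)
The plan is to realize $\Epi(G,H)$ as the complement, inside $\Hom(G,H)$, of those homomorphisms whose image falls into some maximal subgroup, and then to apply the inclusion--exclusion principle. The crucial observation is that a homomorphism $\phi\colon G\to H$ is surjective if and only if $\im\phi$ is contained in no maximal subgroup of~$H$. This rests on the standard fact that in a finite group every proper subgroup lies in at least one maximal subgroup; hence $\im\phi=H$ exactly when $\im\phi\not\subseteq K$ for every $K\in\cH$.

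With this in hand, for each $K\in\cH$ I would introduce the set $A_K\coloneqq\set{\phi\in\Hom(G,H)\given \im\phi\subseteq K}$. Identifying a homomorphism into~$H$ whose image lies in~$K$ with a homomorphism into~$K$ gives $\size{A_K}=\size{\Hom(G,K)}$, and more generally, for any $\cK\subseteq\cH$,
\[
	\bigcap_{K\in\cK}A_K=\set{\phi\in\Hom(G,H)\given \im\phi\subseteq\bigcap_{K\in\cK}K},
\]
whence $\size{\bigcap_{K\in\cK}A_K}=\size{\Hom(G,\bigcap_{K\in\cK}K)}$. By the preceding paragraph, $\Epi(G,H)=\Hom(G,H)\setminus\bigcup_{K\in\cH}A_K$.

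Finally, I would invoke inclusion--exclusion for the union $\bigcup_{K\in\cH}A_K$, which expresses its cardinality as $\sum_{\emptyset\neq\cK\subseteq\cH}(-1)^{\size{\cK}+1}\size{\bigcap_{K\in\cK}A_K}$. Subtracting this from $\size{\Hom(G,H)}$ and absorbing the latter as the $\cK=\emptyset$ term---reading the empty intersection as $H$ itself---produces precisely the stated alternating sum over all subsets $\cK\subseteq\cH$. I expect no genuine obstacle here: the only delicate point is the bookkeeping translation, namely that surjectivity is governed by avoidance of maximal subgroups and that $\bigcap_{K\in\cK}A_K$ corresponds to homomorphisms into $\bigcap_{K\in\cK}K$. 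Once these identifications are secured, the formula is a formal consequence of the inclusion--exclusion principle.
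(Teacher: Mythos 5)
Your proposal is correct and follows essentially the same route as the paper: your sets $A_K$ are exactly the paper's $\Hom^K(G,H)$, and both arguments reduce surjectivity to avoidance of maximal subgroups, then apply inclusion--exclusion and absorb $\size{\Hom(G,H)}$ as the $\cK=\emptyset$ term via the empty-intersection convention. The only difference is cosmetic: you make explicit the finite-group fact that every proper subgroup lies in a maximal one, which the paper leaves implicit.
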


\begin{proof}
	First, note that
	\begin{align*}
		\Epi(G, H) &= \set{ f \colon G \to H \given \im f = H } \\
			&= \bigcap_{K \in \cH} \set{ f \colon G \to H \given \im f \not\le K }.
	\end{align*}
	Thus, by letting \( \Hom^K(G, H) = \set{ f \colon G \to H \given \im f \le K } \), it becomes
	\begin{equation*}
		\Epi(G, H) = \Hom(G, H) - \bigcup_{K \in \cH} \Hom^K(G, H).
	\end{equation*}
	By the inclusion-exclusion principle, we have
	\begin{align*}
		\size{\Epi(G, H)} &= \size{\Hom(G, H)} - \size{\bigcup_{K \in \cH} \Hom^K(G, H)} \\
		&= \size{\Hom(G, H)} + \sum_{\emptyset \neq \cK \subseteq \cH} (-1)^{\size{\cK}} \size{\bigcap_{K \in \cK} \Hom^K(G, H)} \\
		&= \size{\Hom(G, H)} + \sum_{\emptyset \neq \cK \subseteq \cH} (-1)^{\size{\cK}} \size{\Hom(G, \bigcap_{K \in \cK} K)} \\
		&= \sum_{\cK \subseteq \cH} (-1)^{\size{\cK}} \size{\Hom(G, \bigcap_{K \in \cK} K)}.
	\end{align*}
	(Note that if \( \cK = \emptyset \) then \( \bigcap_{K \in \cK} K = \set{ h \in H \given h \in K\ (K \in \cK)} = H \).)
\end{proof}

The next lemma is inspired by the work of Lov\'asz~\cite{Lov67, Lov72}.
\begin{lem}
	\label{lem:hom}
	Let \( G \) and \( H \) be finite groups.
	Then \( G \cong H \) if and only if \( \size{G} = \size{H} \) and
	\begin{equation*}
		\size{\Hom(G, K)} = \size{\Hom(H, K)}
	\end{equation*}
	for every subgroup~\( K \) of~\( G \).
\end{lem}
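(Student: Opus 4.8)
The forward implication is immediate: an isomorphism \( G \to H \) induces by precomposition a bijection \( \Hom(H, K) \to \Hom(G, K) \) for every group \( K \), and of course \( \size{G} = \size{H} \). So the plan is to prove the converse, and the strategy is to bootstrap the hypothesis on homomorphism counts into the analogous statement for \emph{epi}morphism counts, from which an actual isomorphism falls out by taking \( K = G \).

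First I would fix a subgroup \( K \le G \) and apply \cref{lem:epi} twice, once to compute \( \size{\Epi(G, K)} \) and once to compute \( \size{\Epi(H, K)} \). In both cases the right-hand side is the same alternating sum, indexed by subsets of the set of maximal subgroups of~\( K \), of terms of the form \( \size{\Hom(G, L)} \) respectively \( \size{\Hom(H, L)} \), where \( L \) runs through the intersections of those maximal subgroups of~\( K \). The key observation is that every such \( L \) is a subgroup of \( K \), hence a subgroup of \( G \); therefore the hypothesis \( \size{\Hom(G, L)} = \size{\Hom(H, L)} \) applies to each summand, and the two alternating sums agree term by term. This gives
\[
	\size{\Epi(G, K)} = \size{\Epi(H, K)} \qquad \text{for every subgroup } K \le G.
\]

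Next I would specialize to \( K = G \). Since \( \Epi(G, G) \) contains the identity map it is nonempty, so \( \size{\Epi(H, G)} = \size{\Epi(G, G)} > 0 \); in particular there exists an epimorphism \( \varphi \colon H \to G \). As \( \size{H} = \size{G} \) is finite, the surjection \( \varphi \) is forced to be injective, hence bijective, so \( \varphi \) is an isomorphism and \( G \cong H \).

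The whole argument is short once \cref{lem:epi} is available, and no single step is genuinely difficult. The only point demanding a little care is the bookkeeping in the middle step: one must check that every subgroup \( L \) occurring in the inclusion-exclusion expression for \( \size{\Epi(-, K)} \) really is a subgroup of \( G \)---which it is, being an intersection of subgroups of \( K \le G \)---so that the hypothesis is legitimately applied to each term. The decisive idea is simply the choice \( K = G \): it converts the purely numerical equality of epimorphism counts into the existence of a surjection \( H \to G \), which an order comparison then upgrades to an isomorphism.
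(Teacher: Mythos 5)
Your proposal is correct and follows essentially the same route as the paper: both apply \cref{lem:epi} with target~\( G \), use the hypothesis (valid because intersections of maximal subgroups are again subgroups of~\( G \)) to equate the alternating sums, conclude \( \size{\Epi(H, G)} = \size{\Epi(G, G)} \ge 1 \), and upgrade the resulting epimorphism \( H \to G \) to an isomorphism via \( \size{G} = \size{H} \). The only cosmetic difference is that you first establish \( \size{\Epi(G, K)} = \size{\Epi(H, K)} \) for every subgroup \( K \le G \) before specializing to \( K = G \), whereas the paper works with \( K = G \) directly.
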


\begin{proof}
	From \( \size{\Hom(G, K)} = \size{\Hom(H, K)} \) for every subgroup~\( K \) of~\( G \),
	it follows that \( \size{\Epi(H, G)} = \size{\Epi(G, G)} \ge 1 \) by \cref{lem:epi}.
	Hence, we have an epimorphism from~\( H \) to~\( G \).
	It must be an isomorphism because \( \size{G} = \size{H} \).
\end{proof}

For a unital \( R \)-algebra~\( A \) over a unital commutative ring~\( R \),
the unit group of~\( A \) is denoted by~\( A^\ast \).
The following adjoint is well-known.
\begin{lem}
	\label{lem:adjoint}
	Let \( R \) be a unital commutative ring, \( G \) a group and \( A \) a unital \( R \)-algebra.
	Then there is a bijection
	\begin{equation*}
		\Hom(RG, A) \cong \Hom(G, A^\ast)
	\end{equation*}
	which is natural in \( G \) and \( A \).
	\textup{(}Namely, the group algebra functor is left adjoint to the unit group functor.\textup{)}
\end{lem}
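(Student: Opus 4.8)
The plan is to establish the adjunction directly by constructing the bijection and verifying naturality. The key observation is that a unital $R$-algebra homomorphism $\varphi \colon RG \to A$ is determined by its restriction to the group elements $G \subseteq RG$, since $G$ forms an $R$-basis of $RG$ and $\varphi$ must respect the $R$-linear structure. So first I would define a map $\Phi \colon \Hom(RG, A) \to \Hom(G, A^\ast)$ by sending an algebra homomorphism $\varphi$ to its restriction $\varphi|_G$. The main point to check here is that this restriction indeed lands in $\Hom(G, A^\ast)$: since $\varphi$ is unital, $\varphi(1) = 1$, and because each $g \in G$ satisfies $g g^{-1} = 1$ in $RG$, multiplicativity of $\varphi$ forces $\varphi(g)\varphi(g^{-1}) = 1$, so $\varphi(g) \in A^\ast$; moreover $\varphi|_G$ is a group homomorphism because $\varphi$ is multiplicative on $G$.

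Next I would construct the inverse. Given a group homomorphism $\psi \colon G \to A^\ast$, I would extend it $R$-linearly: since $RG$ is the free $R$-module on $G$, there is a unique $R$-linear map $\tilde\psi \colon RG \to A$ with $\tilde\psi(g) = \psi(g)$ for all $g \in G$. The verification that $\tilde\psi$ is a unital $R$-algebra homomorphism is routine: it is $R$-linear by construction, it sends $1 = 1_G$ to $\psi(1_G) = 1_A$ since $\psi$ is a group homomorphism into $A^\ast$, and it is multiplicative because on basis elements $\tilde\psi(gh) = \psi(gh) = \psi(g)\psi(h) = \tilde\psi(g)\tilde\psi(h)$, which extends bilinearly to all of $RG$. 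Defining $\Psi(\psi) = \tilde\psi$, the assignments $\Phi$ and $\Psi$ are mutually inverse: $\Phi(\Psi(\psi)) = \tilde\psi|_G = \psi$ is immediate, and $\Psi(\Phi(\varphi)) = \varphi$ follows because both are $R$-algebra homomorphisms agreeing on the basis $G$.

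Finally I would verify naturality in both variables. For naturality in $A$, given an algebra homomorphism $\alpha \colon A \to B$, one checks that $\alpha$ restricts to a group homomorphism $\alpha|_{A^\ast} \colon A^\ast \to B^\ast$ and that the square relating $\Phi_A$ and $\Phi_B$ commutes, which amounts to the equality $(\alpha \circ \varphi)|_G = (\alpha|_{A^\ast}) \circ (\varphi|_G)$---a triviality since both sides send $g \mapsto \alpha(\varphi(g))$. For naturality in $G$, given a group homomorphism $\theta \colon G' \to G$, the induced algebra map $R\theta \colon RG' \to RG$ sends $g' \mapsto \theta(g')$, and one checks $(\varphi \circ R\theta)|_{G'} = (\varphi|_G) \circ \theta$, again immediate on group elements. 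I do not expect any genuine obstacle here; the statement is a standard free-forgetful style adjunction, and the only care required is the bookkeeping that restriction lands in the unit group and that $R$-linear extension preserves multiplication. The most delicate point, if any, is simply ensuring the unital hypotheses are tracked consistently so that $1 \mapsto 1$ throughout.
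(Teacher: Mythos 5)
Your proposal is correct and follows exactly the same approach as the paper: the paper's proof simply declares that restriction \( (f \colon RG \to A) \mapsto (f|_G \colon G \to A^\ast) \) gives the desired natural bijection and cites Rowen for the details, which are precisely the verifications (restriction lands in \( A^\ast \), inverse by \( R \)-linear extension, naturality in both variables) that you carry out explicitly.
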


\begin{proof}
	The restriction \( (f \colon RG \to A) \mapsto (f|_G \colon G \to A^\ast) \) gives rise to the desired bijection;
	See \cite[pp. 204--205, 490]{Row08} for details.
\end{proof}

Now we propose a definition of hereditary groups which is crucial in this study.
\begin{dfn}
	\label{dfn:hereditary group}
	Set
	\(
		M = \set{ [G] \given \text{\( G \) is a finite group} }
	\)
	where the symbol~\( [G] \) denotes the isomorphism class of a group~\( G \).
	It becomes a commutative monoid with an operation
	\begin{equation*}
		[G] + [H] = [G \times H].
	\end{equation*}
	Let \( K(M) \) denote the Grothendieck group\footnote{This is also called the \emph{group completion} of \( M \). See \cite[Theorem~1.1.3]{Ros94}.} of~\( M \).
	As it is a \ZZ-module (abelian group), we can extend scalars and obtain a \QQ-vector space
	\(
		L(M) = \QQ \otimes_\ZZ K(M).
	\)
	For a finite unital commutative ring \( R \), define the \QQ-subspace \( S(R) \) of \( L(M) \) by
	\begin{equation*}
		S(R) = \sum_{\substack{\text{\( A \) is a finite}\\ \text{unital \( R \)-algebra}}} \QQ[A^\ast].
	\end{equation*}
	We call a finite group~\( G \) is a \emph{hereditary group} over~\( R \) if \( [K] \in S(R) \) for every subgroup~\( K \) of~\( G \).
\end{dfn}

From the definition, being hereditary group is a subgroup-closed property.
Note that the group completion \( M \to K(M) \) is injective because \( M \) is cancellative by the Krull-Schmidt theorem;
The localization \( K(M) \to L(M) \) is also injective because \( K(M) \) is torsion-free.
Thus, \( M \) can be identified with a submonoid of~\( L(M) \).

\begin{exm}
	Let \( C_{q} \) denote the cyclic group of order \( q \).
	Then from
	\( \FF_5^\ast \cong C_4 \) and \( (\FF_5C_5)^\ast \cong C_4 \times (C_5)^4 \)
	we have
	\begin{equation*}
		[C_5] = \tfrac{1}{4}[(\FF_5C_5)^\ast] - \tfrac{1}{4}[\FF_5^\ast] \in S(\FF_5).
	\end{equation*}
	In particular, \( C_5 \) is a hereditary group over \( \FF_5 \).
\end{exm}

The next criterion---our main result---shows that hereditary groups are determined by their group algebras.

\begin{cri}
	\label{cri:iso}
	Let \( G \) and \( H \) be finite groups and let \( R \) be a finite unital commutative ring.
	Suppose \( G \) is a hereditary group over~\( R \).
	If \( RG \cong RH \) then \( G \cong H \).
\end{cri}

The proof is done by describing the number of group homomorphisms in terms of the number of unital algebra homomorphisms.

\begin{proof}[Proof of \cref{cri:iso}]
	Since a unital commutative ring~\( R \) has invariant basis number (IBN) property,
	we have \( \size{G} = \size{H} \) from \( RG \cong RH \).
	Hence, by \cref{lem:hom}, it suffices to prove that
	\( \size{\Hom(G, K)} = \size{\Hom(H, K)} \)
	for every subgroup~\( K \) of~\( G \).

	Since \( G \) is a hereditary group over~\( R \), we have \( [K] \in S(R) \).
	Therefore, there is a positive integer \( n \) and finite unital \( R \)-algebras \( A, B \) such that
	\begin{equation*}
		[K] = \tfrac{1}{n}[A^\ast] - \tfrac{1}{n}[B^\ast].
	\end{equation*}
	Namely,
	\(
		A^\ast \cong B^\ast \times K^n
	\).
	Hence,
	\begin{equation*}
			\size{\Hom(G, A^\ast)}
			= \size{\Hom(G, B^\ast \times K^n)}
			= \size{\Hom(G, B^\ast)} \times \size{\Hom(G, K)}^n.
	\end{equation*}
	Thus, by \cref{lem:adjoint}, we can obtain
	\begin{equation*}
		\size{\Hom(G, K)} =
		\left( \frac{\size{\Hom(G, A^\ast)}}{\size{\Hom(G, B^\ast)}} \right)^{1/n}
		=
		\left( \frac{\size{\Hom(RG, A)}}{\size{\Hom(RG, B)}} \right)^{1/n}.
	\end{equation*}
	We can calculate \size{\Hom(H, K)} similarly and conclude that \( \size{\Hom(G, K)} = \size{\Hom(H, K)} \) from \( RG \cong RH \).
\end{proof}

\begin{rmk}
	Studying a finite group that is a `linear combination' of unit groups, precisely an element of~\( S(R) \),  is essential because
	even the cyclic group of order five cannot be realized as a unit group of any unital ring.
	(See a theorem by Davis and Occhipinti~\cite[Corollary~3]{DO14}, for example.)
	This is quite different from the fact that every abelian \( p \)-groups are hereditary groups over~\( \FF_p \) (\cref{lem:abelian p-groups}).
\end{rmk}

Using this criterion, we provide new proofs for some early theorems on the modular isomorphism problem in the last section.

\section{Quasi-regular groups}
We show that, with \cref{cri:iso},  study of quasi-regular groups can be used to study the isomorphism problem.
Throughout this section, \( R \) denotes a unital commutative ring.

\begin{dfn}
	Let \( A \) be an \( R \)-algebra.
	Define the \emph{quasi-multiplication} on~\( A \) by
	\begin{equation*}
		x \circ y = x + y + xy.
	\end{equation*}
	An element~\( x \in A \) is called \emph{quasi-regular} if there is an element~\( y \in A \) such that
	\( x \circ y = 0 = y \circ x \).
	We denote the set of all quasi-regular elements by~\( Q(A) \).
	It forms a group under the quasi-multiplication and we call it the \emph{quasi-regular group} of~\( A \).
	If \( A = Q(A) \) then \( A \) is called \emph{quasi-regular} (or \emph{radical}).
\end{dfn}

Quasi-multiplication is also called \emph{circle operation} or \emph{adjoint operation}.
Accordingly, quasi-regular groups are also called \emph{circle groups} or \emph{adjoint groups}.
As these terms have completely different meaning in other contexts, we avoid using them.

If an \( R \)-algebra~\( A \) has a multiplicative identity then there is an isomorphism \( Q(A) \to A^\ast \) that is defined by~\( x \mapsto 1 + x \).
We study how quasi-regular groups are related to unit groups, especially when algebras do not have multiplicative identities, in the rest of this section.

\begin{dfn}
	Let \( A \) be an \( R \)-algebra.
	We denote the \emph{unitization} of~\( A \) by~\( A^\un \):
	it is a direct product \( A^\un = A \times R \) as \( R \)-modules and its multiplication is defined by
	\begin{equation*}
		(x, r) \times (y, s) = (sx + ry + xy, rs).
	\end{equation*}
\end{dfn}

Note that \( (0, 1) \in A^\un \) is the multiplicative identity.
The unitization is also called the \emph{Dorroh extension}, especially the case~\( R = \ZZ \).

\begin{lem}
	\label{lem:unit}
	Let \( A \) be a quasi-regular \( R \)-algebra.
	Then an element \( (x, r) \in A^\un \) is a unit if and only if \( r \in R \) is a unit.
\end{lem}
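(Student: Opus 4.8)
The plan is to prove the two implications separately, with the forward direction being immediate and the converse carrying all the content. For the forward direction, I would observe that the second-coordinate projection $\pi\colon A^\un \to R$, $(x,r)\mapsto r$, is a unital ring homomorphism: it sends the identity $(0,1)$ to $1$ and respects the multiplication, since $\pi\bigl((x,r)(y,s)\bigr) = \pi(sx+ry+xy, rs) = rs = \pi(x,r)\,\pi(y,s)$. Because ring homomorphisms carry units to units, if $(x,r)$ is a unit of $A^\un$ then $r = \pi(x,r)$ is a unit of $R$. (Equivalently, any two-sided inverse $(y,s)$ of $(x,r)$ satisfies $rs = 1 = sr$ on the second coordinate.)

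For the converse, suppose $r$ is a unit of $R$. The key observation is the factorization
\[
	(x, r) = (0, r)\,(r^{-1}x, 1),
\]
which one verifies directly from the definition of the multiplication on $A^\un$. It therefore suffices to show that each factor is a unit. The element $(0, r)$ is a unit with inverse $(0, r^{-1})$, since $(0,r)(0,r^{-1}) = (0,1) = (0,r^{-1})(0,r)$.

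For the second factor I would exploit that the map $a \mapsto (a, 1)$ turns the quasi-multiplication into the multiplication of $A^\un$: a short computation gives $(a,1)(b,1) = (a \circ b, 1)$, and it sends the quasi-identity $0 \in A$ to the identity $(0,1)$. Hence $(a, 1)$ is a unit of $A^\un$ exactly when $a \in Q(A)$, the quasi-inverse $b$ of $a$ supplying the inverse $(b,1)$. Since $A$ is quasi-regular we have $Q(A) = A$, so in particular $r^{-1}x \in Q(A)$ and $(r^{-1}x, 1)$ is a unit. Being a product of two units, $(x,r)$ is then a unit, which completes the converse.

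There is no genuine obstacle here; the only thing to get right is the precise factorization and the placement of the scalar $r^{-1}$, because the multiplication on $A^\un$ couples the left scalar action of one factor with the right scalar action of the other. The quasi-regularity hypothesis is used at exactly one point — to guarantee that $r^{-1}x$ admits a quasi-inverse — and this is precisely where the correspondence between $Q(A)$ and units of the form $(a,1)$ does the work.
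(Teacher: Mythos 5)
Your proof is correct and rests on exactly the same key step as the paper's: applying quasi-regularity to \( r^{-1}x \) to obtain a quasi-inverse \( y \). The paper simply exhibits the inverse \( (x,r)^{-1} = (r^{-1}y, r^{-1}) \) and verifies it by direct calculation, whereas you reach the same element through the factorization \( (x,r) = (0,r)(r^{-1}x,1) \) together with the correspondence \( a \mapsto (a,1) \) between \( Q(A) \) and units with second coordinate \( 1 \) --- a tidier packaging of the same computation, since \( (y,1)(0,r^{-1}) = (r^{-1}y, r^{-1}) \).
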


\begin{proof}
	The `only if' part is trivial.
	Let us assume \( r \in R^\ast \) to show \( (x, r) \in (A^\un)^\ast \).
	As \( r^{-1} \in R \) exists and \( r^{-1}x \in A \), there is an element~\( y \in A \) such that
	\begin{equation*}
		(r^{-1}x) \circ y = 0 = y \circ (r^{-1}x)
	\end{equation*}
	because \( A \) is quasi-regular.
	Then it can be shown that \( (x, r)^{-1} = (r^{-1}y, r^{-1}) \) by direct calculation.
\end{proof}

\begin{lem}
	\label{lem:quasi-regular}
	Let \( A \) be a quasi-regular \( R \)-algebra.
	Then
	\begin{equation*}
		(A^\un)^\ast \cong Q(A) \times R^\ast.
	\end{equation*}
	In particular, \( [Q(A)] \in S(R) \) if \( R \) and \( A \) are finite.
\end{lem}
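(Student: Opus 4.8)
The plan is to realize the isomorphism as an internal direct product inside $(A^\un)^\ast$, using \cref{lem:unit} to pin down the unit group set-theoretically. Since $A$ is quasi-regular, every element of $A$ lies in $Q(A)$, so as a set $Q(A) = A$ with group law the circle operation $\circ$; and by \cref{lem:unit} the underlying set of $(A^\un)^\ast$ is precisely $\set{ (x, r) \given x \in A,\ r \in R^\ast }$.

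First I would build two homomorphisms into $(A^\un)^\ast$. The linchpin is the computation $(x, 1) \times (y, 1) = (x + y + xy, 1) = (x \circ y, 1)$ in $A^\un$, which shows that $\iota_1 \colon Q(A) \to (A^\un)^\ast$, $x \mapsto (x, 1)$, is an injective group homomorphism carrying the identity $0$ of $Q(A)$ to the identity $(0, 1)$ of $A^\un$. Likewise $(0, r) \times (0, s) = (0, rs)$ shows that $\iota_2 \colon R^\ast \to (A^\un)^\ast$, $r \mapsto (0, r)$, is an injective homomorphism. Both maps indeed land in units by \cref{lem:unit}, since $1, r \in R^\ast$.

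Next I would check the three conditions for the internal direct product $(A^\un)^\ast = \iota_1(Q(A)) \times \iota_2(R^\ast)$. The images commute elementwise, as $(x, 1) \times (0, r) = (rx, r) = (0, r) \times (x, 1)$; they meet trivially, since $(x, 1) = (0, r)$ forces $x = 0$ and $r = 1$; and they jointly generate, since any $(x, r)$ with $r \in R^\ast$ factors as $(r^{-1}x, 1) \times (0, r)$---here invertibility of $r$ is used essentially. With \cref{lem:unit} confirming that these exhaust the units, this yields $(A^\un)^\ast \cong Q(A) \times R^\ast$.

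For the final assertion I would move to $L(M)$ and read off $[Q(A)] = [(A^\un)^\ast] - [R^\ast]$. When $R$ and $A$ are finite, $A^\un = A \times R$ is a finite unital $R$-algebra, so $[(A^\un)^\ast] \in S(R)$; and $R$, viewed as an algebra over itself, is a finite unital $R$-algebra, so $[R^\ast] \in S(R)$. As $S(R)$ is a $\QQ$-subspace it is closed under subtraction, whence $[Q(A)] \in S(R)$. There is no serious obstacle here; the one point deserving care is the identification of the circle operation on $Q(A)$ with the multiplication on the slice $\set{ (x, 1) \given x \in A }$ of $A^\un$, which is exactly the computation opening the second paragraph.
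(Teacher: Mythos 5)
Your proposal is correct and takes essentially the same approach as the paper: both rest on \cref{lem:unit} and establish the decomposition \( (A^\un)^\ast \cong Q(A) \times R^\ast \) by direct computation in \( A^\un \), then conclude \( [Q(A)] = [(A^\un)^\ast] - [R^\ast] \in S(R) \). The only difference is cosmetic --- you verify the internal direct product conditions via the injections \( x \mapsto (x,1) \) and \( r \mapsto (0,r) \), while the paper checks the universal property of the product via the projections \( (x,r) \mapsto r^{-1}x \) and \( (x,r) \mapsto r \); these are dual verifications of the same isomorphism.
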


\begin{proof}
	Note that there are homomorphisms
	\begin{equation*}
		\begin{tikzcd}[row sep=tiny]
			Q(A)    & (A^\un)^\ast \arrow[l] \arrow[r]             & R^\ast \\
			r^{-1}x & (x, r) \arrow[r, maps to] \arrow[l, maps to] & r
		\end{tikzcd}
	\end{equation*}
	which are well-defined by \cref{lem:unit}.
	It is straightforward to check that these satisfy the universal property of a direct product.
\end{proof}

\begin{rmk}
	For determining whether a finite group is a quasi-regular group of an \( R \)-algebra,
	a theorem by Sandling~\cite[Theorem~1.7]{San74} would be helpful.
	It should be noted that quasi-regular groups also have severely restricted structure as unit groups.
	See~\cite[p.~343]{San84}.
\end{rmk}

\section{Modular isomorphism problem}
As application of \cref{cri:iso},
we provide new proofs for theorems by Deskins~\cite{Des56} and Passi-Sehgal~\cite{PS72}
which are early theorems on the modular isomorphism problem.

\subsection{Abelian (Class at most one)}
Let us state a well-known theorem by Deskins~\cite[Theorem~2]{Des56}.

\begin{thm}[Deskins]
	\label{thm:Deskins}
	Let \( G \) and \( H \) be finite \( p \)-groups.
	Suppose \( G \) is abelian.
	If \( \FF_pG \cong \FF_pH \) then \( G \cong H \).
\end{thm}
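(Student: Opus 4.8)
The plan is to deduce the theorem from \cref{cri:iso}: since $G$ is abelian, it suffices to prove that $G$ is a hereditary group over $\FF_p$, after which $\FF_p G \cong \FF_p H$ immediately yields $G \cong H$. By \cref{dfn:hereditary group} this amounts to showing $[K] \in S(\FF_p)$ for every subgroup $K$ of $G$. As every subgroup of a finite abelian $p$-group is again a finite abelian $p$-group, I would in fact establish the cleaner statement that $[A] \in S(\FF_p)$ for \emph{every} finite abelian $p$-group $A$.

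First I would reduce to the cyclic case. Writing $A \cong C_{p^{n_1}} \times \cdots \times C_{p^{n_r}}$ by the fundamental theorem of finite abelian groups, we have $[A] = [C_{p^{n_1}}] + \cdots + [C_{p^{n_r}}]$ in $L(M)$, because the monoid operation is induced by the direct product. Since $S(\FF_p)$ is by definition a $\QQ$-subspace of $L(M)$, it is closed under this addition, so it is enough to prove $[C_{p^n}] \in S(\FF_p)$ for every $n \ge 1$.

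I would prove this by induction on $n$. For the base case $n = 1$, equip $\FF_p$ with the zero multiplication; this is a finite quasi-regular $\FF_p$-algebra whose quasi-regular group is $(\FF_p, +) \cong C_p$, so \cref{lem:quasi-regular} gives $[C_p] \in S(\FF_p)$. For the inductive step, consider the group algebra $\FF_p C_{p^n}$, whose augmentation ideal $J$ is nilpotent; its group of normalized units is $V = 1 + J$, and $(\FF_p C_{p^n})^\ast \cong \FF_p^\ast \times V$. As $[(\FF_p C_{p^n})^\ast] \in S(\FF_p)$ and $[\FF_p^\ast] \in S(\FF_p)$ (it is the unit group of $\FF_p$ itself), we obtain $[V] = [(\FF_p C_{p^n})^\ast] - [\FF_p^\ast] \in S(\FF_p)$. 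Now $V$ is a finite abelian $p$-group, so $V \cong \prod_j C_{p^{e_j}}$, and the key computation is that $\exp V = p^n$. Indeed, writing $g$ for a generator of $C_{p^n}$ and $x = g - 1$, the Frobenius identity $(1 + u)^{p^k} = 1 + u^{p^k}$ valid in characteristic $p$ shows that $1 + x$ has order exactly $p^n$ (since $x^{p^n} = (g-1)^{p^n} = g^{p^n} - 1 = 0$ while $x^{p^{n-1}} = g^{p^{n-1}} - 1 \neq 0$), whereas every $u \in J$ satisfies $u^{p^n} \in J^{p^n} = 0$, so every element of $V$ has order dividing $p^n$. Hence each $e_j \le n$ and at least one $e_j$ equals $n$, giving $[V] = m_n[C_{p^n}] + \sum_{k < n} m_k[C_{p^k}]$ with $m_n \ge 1$. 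By the induction hypothesis $[C_{p^k}] \in S(\FF_p)$ for $k < n$, whence $[C_{p^n}] = \tfrac{1}{m_n}\bigl([V] - \sum_{k<n} m_k[C_{p^k}]\bigr) \in S(\FF_p)$.

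Combining these steps shows $[A] \in S(\FF_p)$ for every finite abelian $p$-group $A$, so $G$ is a hereditary group over $\FF_p$ and the theorem follows from \cref{cri:iso}. I expect the main obstacle to be the inductive step: one must understand the normalized unit group $V$ well enough to run this triangular inversion, and the crucial ingredient is the exponent computation $\exp V = p^n$, which pins down that the top cyclic factor $C_{p^n}$ genuinely occurs in $V$ with positive multiplicity $m_n$. Everything else is a direct consequence of the definitions, the splitting $(\FF_p C_{p^n})^\ast \cong \FF_p^\ast \times V$, and the structure theorem for finite abelian groups.
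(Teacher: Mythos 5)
Your proposal is correct and takes essentially the same route as the paper: reduce via \cref{cri:iso} to showing that every finite abelian $p$-group lies in $S(\FF_p)$, reduce to the cyclic case, and induct on $n$ using the splitting $(\FF_p C_{p^n})^\ast \cong \FF_p^\ast \times V$ with $V = 1 + \Delta(C_{p^n})$ together with the exponent computation $\exp V = p^n$ (which you spell out and the paper leaves as ``easy to see''). The only cosmetic difference is the base case: you start at $n=1$ via the zero-multiplication algebra on $\FF_p$ and \cref{lem:quasi-regular}, whereas the paper starts at the trivial group $n=0$; both work.
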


To use our criterion, we need to prove the following, which is also useful to prove \cref{thm:Passi-Sehgal-odd,thm:Passi-Sehgal-even}.

\begin{lem}
	\label{lem:abelian p-groups}
	Finite abelian \( p \)-groups are hereditary groups over~\( \FF_p \).
\end{lem}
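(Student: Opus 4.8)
The plan is to show directly that $[A] \in S(\FF_p)$ for every finite abelian $p$-group $A$; since a subgroup of a finite abelian $p$-group is again a finite abelian $p$-group, this immediately yields the subgroup-closed condition of \cref{dfn:hereditary group}. As $S(\FF_p)$ is a $\QQ$-subspace of $L(M)$ and the monoid operation satisfies $[G \times H] = [G] + [H]$, the structure theorem for finite abelian groups reduces the claim to proving $[C_{p^e}] \in S(\FF_p)$ for the cyclic group $C_{p^e}$ of order $p^e$, for every $e \ge 1$. I would prove this by induction on $e$.

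The engine of the induction is the normalized unit group. Writing $t = x - 1$ and using $x^{p^e} - 1 = (x-1)^{p^e}$ in characteristic $p$, one identifies $\FF_p C_{p^e} \cong \FF_p[t]/(t^{p^e})$, whose augmentation ideal $\omega = (t)$ is a finite nilpotent---hence quasi-regular---$\FF_p$-algebra. Its quasi-regular group is the group of normalized units $V = 1 + \omega$, so \cref{lem:quasi-regular} gives $[V] \in S(\FF_p)$ at no cost.

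It remains to extract $[C_{p^e}]$ from $[V]$. The key point is that $V$ has exponent exactly $p^e$: for $u \in \omega$ the Frobenius identity $(1+u)^{p^j} = 1 + u^{p^j}$ shows that the order of $1 + u$ is governed by the $t$-adic valuation of $u$, and it is maximized, equal to $p^e$, by the group element $x = 1 + t$ itself. Consequently, as a finite abelian $p$-group, $V \cong C_{p^e}^{\,c} \times \prod_{j<e} C_{p^j}^{\,c_j}$ with $c \ge 1$, so that in $L(M)$
\begin{equation*}
	[V] = c\,[C_{p^e}] + \sum_{j=1}^{e-1} c_j\,[C_{p^j}].
\end{equation*}
By the inductive hypothesis each $[C_{p^j}]$ with $j < e$ lies in $S(\FF_p)$ (the base case $e = 1$ is immediate, since then $\exp V = p$ forces $[V] = c\,[C_p]$); as $S(\FF_p)$ is a subspace and $[V] \in S(\FF_p)$, we obtain $c\,[C_{p^e}] \in S(\FF_p)$ and hence $[C_{p^e}] \in S(\FF_p)$ after dividing by $c$ in the $\QQ$-vector space.

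I expect the main obstacle to be the structural input in the last paragraph: one must be sure that $C_{p^e}$ genuinely occurs as a direct factor of $V$ and that no larger cyclic factor occurs, i.e.\ that $\exp V = p^e$. Everything else---the reduction to cyclic groups, the membership $[V] \in S(\FF_p)$, and solving the resulting linear relation---is formal once this exponent computation, carried out via the Frobenius identity above, is in place. Note that the exact multiplicities $c$ and $c_j$ are irrelevant; only $c \ge 1$ together with the absence of factors of order exceeding $p^e$ is needed.
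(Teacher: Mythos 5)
Your proof is correct, and its skeleton---reduction to cyclic $p$-groups via the structure theorem, induction on the exponent, the normalized unit group $V = 1 + \omega$ together with the fact $\exp V = p^e$, and solving the resulting linear relation in the $\QQ$-vector space---is exactly that of the paper. The one substantive difference is how you certify $[V] \in S(\FF_p)$: the paper works directly from \cref{dfn:hereditary group}, writing $(\FF_pC_{p^n})^\ast = V \times \FF_p^\ast$ and hence $[V] = [(\FF_pC_{p^n})^\ast] - [\FF_p^\ast]$, whereas you route the claim through \cref{lem:quasi-regular} applied to the nilpotent (hence quasi-regular) augmentation ideal $\omega$. These are two packagings of the same computation: $\FF_pC_{p^e}$ is precisely the unitization $\omega^\un$, so the proof of \cref{lem:quasi-regular} reproduces verbatim the decomposition the paper writes out by hand. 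Your version has the mild advantage of making the abelian case uniform with the class-two cases (\cref{lem:exp p,lem:exp 4}), which also pass through quasi-regular algebras and \cref{lem:quasi-regular}; the paper's version keeps this lemma self-contained, using nothing beyond the definition of $S(\FF_p)$. You also supply the Frobenius identity $(1+u)^{p^j} = 1 + u^{p^j}$ behind the claim $\exp V = p^e$, which the paper dismisses as ``easy to see''; you correctly isolate that exponent computation as the only nonformal input of the whole argument.
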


\begin{proof}
	Let \( C_{p^n} \) denote the cyclic \( p \)-group of order~\( p^n \).
	Since a finite abelian \( p \)-group is a direct product of finite cyclic \( p \)-groups and
	being finite cyclic \( p \)-group is a subgroup-closed property, it suffices to prove that \( [C_{p^n}] \in S(\FF_p) \).
	We prove it by induction on~\( n \).

	\paragraph*{Base case (\( n = 0 \))}
	Clear by \cref{dfn:hereditary group}.

	\paragraph*{Inductive case (\( n > 0 \))}
	Let \( \Delta(C_{p^n}) \) denote the augmentation ideal of~\( \FF_pC_{p^n} \).
	Then
	\begin{equation*}
		(\FF_pC_{p^n})^\ast = V \times \FF_p^\ast
	\end{equation*}
	where~\( V = 1 + \Delta(C_{p^n}) \).
	It is clear that \( V \) is a finite abelian group.
	Furthermore, it is easy to see the exponent of~\( V \) equals~\( p^n \).
	Therefore,
	\(
		V \cong \prod_{i = 1}^n (C_{p^i})^{a_i}
	\)
	for some non-negative integers \( a_1, \dotsc, a_{n-1} \) and a positive integer \( a_n \).
	Hence,
	\begin{align*}
		[C_{p^n}]
		&= \frac{1}{a_n}[V] - \sum_{i = 1}^{n - 1} \frac{a_i}{a_n}[C_{p^i}] \\
		&= \frac{1}{a_n}[(\FF_pC_{p^n})^\ast] - \frac{1}{a_n}[\FF_p^\ast] - \sum_{i = 1}^{n - 1} \frac{a_i}{a_n}[C_{p^i}]
	\end{align*}
	and we have \( [C_{p^n}] \in S(\FF_p) \) by induction.

	(This lemma can be also proved by appealing to the structure theorem of~\( (\FF_pC_{p^n})^\ast \) by Janusz~\cite[Theorem~3.1]{Jan70}.)
\end{proof}

With this lemma, we provide a new proof of the Deskins theorem.

\begin{proof}[Proof of \cref{thm:Deskins}]
	Since the finite abelian \( p \)-group \( G \) is a hereditary group over~\( \FF_p \) by \cref{lem:abelian p-groups}, \( \FF_pG \cong \FF_pH \) implies \( G \cong H \) by \cref{cri:iso}.
\end{proof}

\begin{rmk}
	Besides the original proof by Deskins,
	an alternative simple proof is given by Coleman~\cite[Theorem~4]{Col64}.
	Proofs can be found in monographs such as \cite[2.4.3]{KP69}, \cite[Lemma~14.2.7]{Pas77}, \cite[(III.6.2)]{Seh78}, \cite[Theorem~9.6.1]{PMS02} or \cite[Theorem~4.10]{Seh03} as well.
\end{rmk}

\subsection{Class two and exponent~$p$}
The aim of this subsection is to provide a new proof of the following theorem by Passi and Sehgal~\cite[Corollary~13]{PS72}.
\begin{thm}[Passi-Sehgal]
	\label{thm:Passi-Sehgal-odd}
	Let \( G \) and \( H \) be finite \( p \)-groups.
	Suppose \( G \) is of class two and exponent~\( p \).
	If \( \FF_pG \cong \FF_pH \) then \( G \cong H \).
\end{thm}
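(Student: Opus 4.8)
The plan is to deduce the theorem from \cref{cri:iso}: it suffices to prove that a finite \( p \)-group \( G \) of class two and exponent~\( p \) is a hereditary group over~\( \FF_p \) (\cref{lem:exp p}), for then \( \FF_pG \cong \FF_pH \) forces \( G \cong H \). By \cref{dfn:hereditary group} being hereditary is subgroup-closed, and every subgroup of~\( G \) is again of class at most two and of exponent dividing~\( p \). Hence it is enough to show that \( [L] \in S(\FF_p) \) for an arbitrary finite group~\( L \) of class at most two and exponent~\( p \). When \( p = 2 \) the hypothesis forces \( L \) to be elementary abelian, a case already covered by \cref{lem:abelian p-groups}, so I may assume \( p \) is odd.

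The engine is \cref{lem:quasi-regular}: if I can produce a finite nilpotent---hence quasi-regular---\( \FF_p \)-algebra~\( A \) with \( Q(A) \cong L \), then \( [L] = [Q(A)] \in S(\FF_p) \) and I am done. I would build~\( A \) by hand from the commutator structure of~\( L \). Write \( V = L/[L,L] \) and \( W = [L,L] \), both elementary abelian and hence \( \FF_p \)-vector spaces; since \( L \) has class at most two and exponent~\( p \), the commutator induces an alternating \( \FF_p \)-bilinear map \( c \colon V \times V \to W \). Fixing an ordered basis of~\( V \), I choose a bilinear map \( \beta \colon V \times V \to W \) whose antisymmetrization \( \beta(u,v) - \beta(v,u) \) equals \( c(u,v) \) (for instance a triangular choice against that basis). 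Now set \( A = V \oplus W \) with the non-unital multiplication \( (u_1, w_1)(u_2, w_2) = (0, \beta(u_1, u_2)) \). Then \( A^2 \subseteq W \) and \( A^3 = 0 \), so \( A \) is nilpotent and quasi-regular.

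It remains to identify the quasi-regular group. A direct computation in the quasi-multiplication gives \( x^{\circ n} = (nu,\, nw + \binom{n}{2}\beta(u,u)) \) for \( x = (u,w) \), so \( x^{\circ p} = 0 \) because \( p \mid \binom{p}{2} \) for odd~\( p \); thus \( Q(A) \) has exponent~\( p \). Since \( A^3 = 0 \), the subgroup~\( W \) is central, and the commutator in~\( Q(A) \) reads off as \( \beta(u_1, u_2) - \beta(u_2, u_1) = c(u_1, u_2) \), so \( Q(A) \) has class at most two with commutator form~\( c \). Therefore \( Q(A) \) is a central extension of~\( V \) by~\( W \), of exponent~\( p \) and with the same commutator form as~\( L \), from which one concludes \( Q(A) \cong L \).

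The main obstacle is exactly this last identification: verifying that a group of class at most two and exponent~\( p \) is recovered, up to isomorphism, from the pair \( (V, W) \) together with the alternating form~\( c \), and that the cocycle~\( \beta \) can be matched to a factor set of~\( L \). For odd~\( p \) this is the class-two instance of the Baer--Lazard correspondence, and the oddness is indispensable, since it is precisely what annihilates the \( \binom{p}{2} \) power-map term; for \( p = 2 \) the statement is empty, matching the reduction made at the outset.
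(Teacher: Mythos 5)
Your proposal is correct, and while it follows the paper's overall skeleton (reduce via \cref{cri:iso} to showing \( G \) is hereditary over \( \FF_p \), then realize each subgroup as the quasi-regular group of a finite quasi-regular \( \FF_p \)-algebra and invoke \cref{lem:quasi-regular}), it replaces the paper's key ingredient by a genuinely different construction. The paper proves \cref{lem:exp p} by citing the Ault--Watters theorem \cite{AW73, GR73} (\cref{thm:Ault-Watters}), in which the nilpotent ring structure is built directly on the underlying set of \( G \) itself, via \( g + h = g h\, m(g,h)^{-1} \) and \( g \times h = m(g,h) \) for a suitable central-valued map \( m \) (essentially the Baer trick made associative); there \( Q(A) \cong G \) holds by construction, with no identification step needed. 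You instead build the split model \( A = V \oplus W \) from a bilinear lift \( \beta \) of the commutator form \( c \), which makes the algebra completely explicit and self-contained, but shifts the burden onto the final identification \( Q(A) \cong L \): you must know that a finite \( p \)-group of class at most two and exponent \( p \), \( p \) odd, is determined up to isomorphism by the triple \( (V, W, c) \). That fact is true, and your attribution to the class-two (Baer) case of the Lazard correspondence is the right one; alternatively it follows from the classification of central extensions of \( V \) by \( W \) for \( p \) odd, where the cohomology class is recorded by the pair (commutator form, \( p \)-th power map) and your computation \( x^{\circ p} = \bigl(0, \binom{p}{2}\beta(u,u)\bigr) = 0 \) shows the power map vanishes for both \( L \) and \( Q(A) \). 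So your route trades the paper's external citation for a standard but nontrivial classification fact; notably, both arguments hinge on \( p \) being odd at exactly the same point (killing \( \binom{p}{2} \)), and both fail for \( p = 2 \), where the exponent-four analogue (\cref{thm:Bovdi}) takes over.
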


See also \cref{rmk:M3}.
To use our criterion, we need to prove the following.

\begin{lem}
	\label{lem:exp p}
	Finite \( p \)-groups of class two and exponent~\( p \) are hereditary groups over~\( \FF_p \).
\end{lem}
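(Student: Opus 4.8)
The plan is to reduce to the quasi-regular machinery of the previous section by truncating the augmentation ideal. A finite group of class two and exponent $p$ forces $p$ to be odd (exponent-two groups are abelian), so I may assume $p$ is odd throughout. Because being a hereditary group is subgroup-closed (\cref{dfn:hereditary group}) and a subgroup of a class-two exponent-$p$ group is again of class at most two and exponent dividing $p$, it suffices to prove the single statement: for every finite $p$-group $G$ of class at most two and exponent $p$ one has $[G] \in S(\FF_p)$. When $G$ is abelian this is \cref{lem:abelian p-groups}, so I may assume $G$ has class exactly two.

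Write $\Delta = \Delta(G)$ for the augmentation ideal and set $A = \Delta/\Delta^3$. This is a finite nilpotent (hence quasi-regular) $\FF_p$-algebra with $A^3 = 0$, so by \cref{lem:quasi-regular} the group $W := Q(A) \cong 1 + \Delta/\Delta^3$ satisfies $[W] \in S(\FF_p)$. First I would record the basic structure of $W$: from $[1+\Delta,\,1+\Delta] \subseteq 1+\Delta^2$ and $[1+\Delta^2,\,1+\Delta] \subseteq 1+\Delta^3 = 1$ the group $W$ has class at most two, and for $w = 1+x$ the expansion $w^p = 1 + px + \binom{p}{2}x^2$ (all higher terms lying in $\Delta^3$) vanishes in characteristic $p$ for odd $p$ since $\Delta^3 = 0$, so $W$ has exponent $p$. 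Thus $W$ is again a group of class at most two and exponent $p$.

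The heart of the argument is to identify $W$ up to a central elementary abelian factor. I would compute the commutator structure of $W$ and match it with that of $G$: using $g_i g_j - g_j g_i \equiv [g_i,g_j] - 1 \pmod{\Delta^3}$ (valid because $[g_i,g_j]-1 \in \Delta^2$ and the correction lies in $\Delta^3$), the commutator map of $W$ on $W^{\mathrm{ab}}$ is carried, via $c \mapsto c-1$, exactly onto the commutator pairing of $G$. A dimension-subgroup computation gives $G \cap (1+\Delta^3) = 1$, so $c \mapsto c - 1$ embeds $[G,G]$ into $\Delta^2/\Delta^3$ and yields an isomorphism $[W,W] \cong [G,G]$ identifying the two alternating bilinear forms. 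Since for odd $p$ a group of class at most two and exponent $p$ is determined up to isomorphism by the pair $(W^{\mathrm{ab}}, [W,W])$ together with its induced alternating commutator map---and since the radical of that map splits off as a central direct factor of exponent $p$---this matching produces an isomorphism $W \cong G \times (C_p)^{s}$, where $C_p$ is the cyclic group of order $p$ and $s = \dim_{\FF_p}(\Delta^2/\Delta^3) - \dim_{\FF_p}[G,G]$ counts the purely symmetric part of $\Delta^2/\Delta^3$. Granting this, the conclusion is immediate: $[G] = [W] - s\,[C_p]$ lies in $S(\FF_p)$ because $[W] \in S(\FF_p)$ by \cref{lem:quasi-regular} and $[C_p] \in S(\FF_p)$ by \cref{lem:abelian p-groups}.

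The main obstacle is the structural isomorphism $W \cong G \times (C_p)^{s}$. Verifying the exponent and nilpotency class of $W$ is routine, but pinning down its isomorphism type requires two genuine inputs: the dimension-subgroup identity $G \cap (1+\Delta^3)=1$, which guarantees that no commutator information of $G$ is lost on passing to $\Delta/\Delta^3$, and the classification fact (available for odd $p$ via the Lazard correspondence) that a class-two exponent-$p$ group is reconstructed from its commutator form with the form's radical appearing as a free central elementary abelian factor. Care is needed to check that this central radical is genuinely complemented rather than merely central, so that it splits as a direct factor $(C_p)^{s}$ and the remaining factor is forced to be $\cong G$.
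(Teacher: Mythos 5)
Your proof is correct, but it takes a genuinely different route from the paper's. The paper settles the key step by citing the Ault--Watters theorem (\cref{thm:Ault-Watters}, \cite{AW73, GR73}): a finite $p$-group $G$ of class two and exponent $p$ is \emph{itself} the quasi-regular group of some finite quasi-regular $\FF_p$-algebra, so $[G] \in S(\FF_p)$ follows at once from \cref{lem:quasi-regular}. You instead manufacture the algebra from $G$, taking $A = \Delta(G)/\Delta(G)^3$, accept that its quasi-regular group $W = Q(A)$ recovers $G$ only up to an elementary abelian direct factor, $W \cong G \times (C_p)^s$, and then cancel that factor inside the $\QQ$-vector space $S(\FF_p)$ via $[G] = [W] - s[C_p]$, with $[W] \in S(\FF_p)$ by \cref{lem:quasi-regular} and $[C_p] \in S(\FF_p)$ by \cref{lem:abelian p-groups}. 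Your structural claim is true and your sketch has the right ingredients: since $A^3 = 0$ one computes $[1+x,\,1+y] = 1 + (xy - yx)$ exactly, so $[W,W] = 1 + [A,A]$ with $[A,A]$ spanned by the images of $[g,h]-1$ (your congruence); the vanishing $G \cap (1+\Delta^3) = G^p\gamma_3(G) = 1$ (cf.\ \cref{rmk:M3}) then gives $[W,W] \cong [G,G]$ and embeds $G^{\mathrm{ab}}$ into $W^{\mathrm{ab}}$; the image of $1 + \Delta^2/\Delta^3$ in $W^{\mathrm{ab}}$ is a complement of the image of $G^{\mathrm{ab}}$ lying in the radical of the commutator pairing (a dimension count using $G \cap (1+\Delta^2) = [G,G]$), so the pairing of $W$ is that of $G$ extended by zero, and the Baer--Lazard correspondence for odd $p$ forces $W \cong G \times (C_p)^s$. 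As for what each approach buys: the paper's proof is short but rests on an external realization theorem (whose published proof even required the correction \cite{GR73}); yours is self-contained modulo standard dimension-subgroup facts and the class-two Lazard correspondence, it exhibits the relevant algebra explicitly, and it exercises the $\QQ$-linear structure of $S(\FF_p)$ from \cref{dfn:hereditary group} in an essential way---at the cost of the extra work, which you correctly identify as the main obstacle, needed to pin down the isomorphism type of $W$.
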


A key ingredient for the proof is a slight modification of the theorem by Ault and Watters~\cite{AW73, GR73}.

\begin{thm}[Ault-Watters]
	\label{thm:Ault-Watters}
	Let \( G \) be a finite \( p \)-group.
	Suppose \( G \) is of class two and exponent~\( p \).
	Then there is a finite quasi-regular \( \FF_p \)-algebra~\( A \) with~\( Q(A) \cong G \).
\end{thm}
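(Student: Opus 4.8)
The plan is to realize $G$ as the quasi-regular group $Q(A)$ of a small nilpotent $\FF_p$-algebra $A$ manufactured from the commutator pairing of $G$; nilpotence will make $A$ quasi-regular for free. Since a group of exponent two is abelian, the case $p=2$ only asks to realize an elementary abelian group, which is $Q(A)$ for $A$ the underlying $\FF_p$-space equipped with zero multiplication; so I assume from now on that $p$ is odd.

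First I would extract the linear data of $G$. Set $V=G/[G,G]$ and $W=[G,G]$: since $G$ is of class two and exponent $p$, both are $\FF_p$-vector spaces, $W$ is central, and the commutator induces an alternating $\FF_p$-bilinear map $\beta\colon V\times V\to W$, $\beta(\bar g,\bar h)=[g,h]$, whose image generates $W$. Because $p$ is odd, $2$ is invertible, so I set $\mu=\tfrac12\beta$; it is bilinear and satisfies $\mu(u,v)-\mu(v,u)=\beta(u,v)$.

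Next I would build the algebra. Let $A=V\oplus W$ as an $\FF_p$-space, with product defined by $u\cdot u'=\mu(u,u')$ for $u,u'\in V$ and $V\cdot W=W\cdot V=W\cdot W=0$. Every triple product then lands in $A\cdot W+W\cdot A=0$, so $A^3=0$; hence $A$ is a finite associative $\FF_p$-algebra in which each $x$ has quasi-inverse $-x+x^2$, so $A$ is quasi-regular. Computing the circle operation gives $(u,w)\circ(u',w')=(u+u',\,w+w'+\mu(u,u'))$, which presents $Q(A)$ as the central extension of $V$ by $W$ with cocycle $\mu$; its commutator pairing is $\mu(u,u')-\mu(u',u)=\beta$, and in the unitization $A^\un$ one has $(1+x)^p=1+\binom{p}{2}x^2=1$ because $p\mid\binom{p}{2}$, so $Q(A)$ has exponent $p$.

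The remaining and decisive step is to identify $Q(A)$ with $G$ itself, which I would phrase cohomologically. Presenting $G$ as a central extension of $V$ by $W$ with a normalized cocycle $f$, the known commutator pairing forces the alternating part of $f$ to equal $\tfrac12\beta=\mu$, so $f=\mu+s$ with $s$ symmetric; it suffices to prove $s$ is a coboundary, for then $f$ and $\mu$ are cohomologous and $G\cong Q(A)$. Here is where exponent $p$ does the work: since the alternating part contributes nothing to $p$-th powers, one computes $(u,0)^p=\bigl(0,\sum_{k=1}^{p-1}s(ku,u)\bigr)$, the same formula in $G$ and in the abelian extension $E_s$ of $V$ by $W$ defined by $s$ alone. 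As $G$ has exponent $p$ this sum vanishes for all $u$, so $E_s$ is an abelian extension of elementary abelian groups of exponent $p$, hence elementary abelian and split; thus $s$ is a coboundary. I expect this separation of the rigid alternating part (the commutator) from the symmetric part (the power data, trivialized by exponent $p$ with $p$ odd) to be the main obstacle, and it is exactly where both hypotheses on $G$ are consumed.
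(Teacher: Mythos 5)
Your proof is correct, but it takes a genuinely different route from the paper. The paper's proof of this theorem is essentially a citation plus a characteristic argument: it quotes \cite{AW73} (as corrected in \cite{GR73}) for the existence of a finite quasi-regular ring structure on the set $G$ itself, with operations $g + h = g\cdot h\cdot m(g,h)^{-1}$ and $g \times h = m(g,h)$ for a certain central-valued map $m$, so that $Q(A) \cong G$ comes for free; the only work done in the paper is to show this ring is an $\FF_p$-algebra, by proving inductively that the $n$-fold sum of $g$ equals $g^n \cdot m(g,g)^{-n(n-1)/2}$ and then using that $p$ is odd and $G$ has exponent $p$ to make the $p$-fold sum vanish. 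You instead reprove the quoted theorem from scratch: you build $A = V \oplus W$ with multiplication $\mu = \tfrac12\beta$ coming from the commutator pairing, note $A^3 = 0$, and identify $Q(A)$ with $G$ cohomologically, by writing an arbitrary normalized cocycle of the extension $1 \to W \to G \to V \to 1$ as $f = \mu + s$ with $s$ symmetric and using exponent $p$ to show $\sum_{k=1}^{p-1} s(ku,u) = 0$, whence the abelian extension $E_s$ is elementary abelian, splits, and $s$ is a coboundary. Your route is self-contained, natively produces an $\FF_p$-algebra (no characteristic upgrade needed), and makes transparent exactly where class two, exponent $p$, and oddness of $p$ are consumed; the paper's route is far shorter on the page but outsources the main construction to the literature, where the ring structure is placed on the set $G$ itself rather than on $V \oplus W$. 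Two small points to polish in your write-up: you should record that $\mu$, being bilinear, is itself a $2$-cocycle, so that $s = f - \mu$ is a (symmetric) cocycle and the decomposition makes sense; and the computation $(1+x)^p = 1$ in $A^\un$ is a sanity check rather than a needed step, since exponent $p$ for $Q(A)$ follows automatically once $G \cong Q(A)$ is established.
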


\begin{proof}
	It is proved in \cite{AW73} that there is a finite quasi-regular ring \( A = G \) with operations\footnote{Beware that the operations defined in \cite{AW73} are incorrect; It is corrected in \cite{GR73}.} defined by
	\begin{align*}
		g + h      &= g \cdot h \cdot m(g, h)^{-1} \\
		g \times h &= m(g, h)
	\end{align*}
	and \( Q(A) \cong G \);
	Here \( m \colon G \times G \to \zeta(G) \) is a certain map where~\( \zeta(G) \) is the center of~\( G \).
	In particular, \( 1 \in G \) is the additive identity of~\( A \).
	By induction, it can be shown that
	\begin{equation*}
		\underbrace{g + \dotsb + g}_{n} = g^n \cdot m(g, g)^{-n(n-1)/2}
	\end{equation*}
	for a positive integer~\( n \).
	Since the prime~\( p \) is odd because of our assumption,
	we can prove
	\begin{equation*}
		\underbrace{g + \dotsb + g}_{p} = 1.
	\end{equation*}
	Therefore, there is a canonical \( \FF_p \)-algebra structure on~\( A \).
\end{proof}

\begin{proof}[Proof of \cref{lem:exp p}]
	Since every finite abelian \( p \)-groups are hereditary groups over \( \FF_p \) by \cref{lem:abelian p-groups},
	it suffices to prove that \( [G] \in S(\FF_p) \) for every finite \( p \)-group~\( G \) of class two and exponent~\( p \).
	Because such \( p \)-group~\( G \) is a quasi-regular group of some finite quasi-regular \( \FF_p \)-algebra by \cref{thm:Ault-Watters}, \( [G] \in S(\FF_p) \) follows from \cref{lem:quasi-regular}.
\end{proof}

Now we are in position to prove a theorem by Passi and Sehgal.

\begin{proof}[Proof of \cref{thm:Passi-Sehgal-odd}]
	Since the finite \( p \)-group~\( G \) of class two and exponent~\( p \) is a hereditary group over~\( \FF_p \) by \cref{lem:exp p}, \( \FF_pG \cong \FF_pH \) implies \( G \cong H \) by \cref{cri:iso}.
\end{proof}

\subsection{Class two and exponent four}

The aim of this subsection is to prove the even prime counterpart of \cref{thm:Passi-Sehgal-odd}.

\begin{thm}[Passi-Sehgal]
	\label{thm:Passi-Sehgal-even}
	Let \( G \) and \( H \) be finite \( 2 \)-groups.
	Suppose \( G \) is of class two and exponent four.
	If \( \FF_2G \cong \FF_2H \) then \( G \cong H \).
\end{thm}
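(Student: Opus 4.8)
The plan is to follow, at the top level, the same template used for the odd-prime case of \cref{thm:Passi-Sehgal-odd}: by \cref{cri:iso} it suffices to show that a finite $2$-group $G$ of class two and exponent four is a hereditary group over $\FF_2$, whereupon $\FF_2 G \cong \FF_2 H$ forces $G \cong H$. Since being hereditary is subgroup-closed, I first reduce the verification to $G$ itself. A subgroup of a class-two exponent-four $2$-group again has class at most two and exponent dividing four; as a class-two group of exponent two would be abelian, every such subgroup is either abelian or again of class two and exponent four. The abelian case is already covered by \cref{lem:abelian p-groups}, so the whole task collapses to proving that $[G] \in S(\FF_2)$ for every finite $2$-group $G$ of class two and exponent four, and then this single statement simultaneously handles $G$ and all of its subgroups.

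To produce such a membership I would invoke \cref{lem:quasi-regular}: if $G$ can be realized as the quasi-regular group $Q(A)$ of some finite quasi-regular $\FF_2$-algebra $A$, then $[G] = [Q(A)] \in S(\FF_2)$ at once, since $(A^{\un})^\ast \cong Q(A) \times \FF_2^\ast \cong Q(A)$. Thus the heart of the matter is an even-prime analogue of the Ault-Watters theorem (\cref{thm:Ault-Watters}), namely the claim that \emph{every finite $2$-group of class two and exponent four is the quasi-regular group of a finite quasi-regular $\FF_2$-algebra}. Granting such a realization, the proof of \cref{thm:Passi-Sehgal-even} is then a one-line appeal to \cref{cri:iso}, exactly as in the odd case.

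The hard part is precisely this realization, and the odd-prime construction does not transfer. In any $\FF_2$-algebra the circle square coincides with the ring square, $g \circ g = 2g + g^2 = g^2$, so the group squaring on $Q(A)$ is honest ring squaring and an element has order dividing four exactly when $g^4 = 0$ in $A$. Consequently the Ault-Watters additive law $g + g = g^2 \cdot m(g,g)^{-1}$, which for odd $p$ was forced to vanish using $p \mid \tfrac{p(p-1)}{2}$ together with $g^p = 1$, can no longer be annihilated in this naive way: exponent four means $g^2 \neq 1$ for some $g$, so the putative additive group cannot be made to have exponent two. I therefore expect to build $A$ by different means, encoding a class-two exponent-four group through its factor-set (central-extension) data while simultaneously arranging characteristic two and nil index four, the delicate point being to match the ring-squaring map $x \mapsto x^2$---an $\FF_2$-quadratic form whose polarization $xy + yx$ is the ring commutator, and hence also the datum governing the group commutator---to the squaring map of $G$. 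Verifying that such an associative multiplication exists for every group in the class, and that its quasi-regular group is $G$ on the nose, is the main obstacle; once it is in place, \cref{lem:quasi-regular} and \cref{cri:iso} finish the theorem with no further work.
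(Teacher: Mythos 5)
Your proposal follows exactly the paper's route: reduce via \cref{cri:iso} to showing $G$ is hereditary over $\FF_2$, reduce the subgroup check to the abelian case (\cref{lem:abelian p-groups}) plus the class-two exponent-four case, and settle the latter by realizing $G$ as the quasi-regular group of a finite quasi-regular $\FF_2$-algebra and invoking \cref{lem:quasi-regular}. The one step you flag as ``the main obstacle''---that every finite $2$-group of class two and exponent four is $Q(A)$ for such an algebra $A$---is precisely \cref{thm:Bovdi}, a theorem of Bovdi \cite{Bov96}, which the paper does not prove either but imports from the literature as the even-prime counterpart of \cref{thm:Ault-Watters}. So your argument coincides with the paper's in every step the paper actually carries out; what remains on your side is not a flaw in the strategy but the absence of this known citation. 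Your diagnosis of why the odd-prime Ault--Watters construction cannot be transferred (in characteristic two the circle square is the ring square, so exponent four is incompatible with forcing additive exponent $p$ as in the odd case) is correct and is exactly the reason a separate theorem is needed at $p=2$; however, your sketch of building $A$ from factor-set data is only a heuristic, and if you wanted a self-contained treatment you would have to carry out Bovdi's construction in full rather than gesture at it. As a minor point, your explicit observation that every subgroup of a class-two exponent-four group is either abelian or again of class two and exponent four is the same (implicit) reduction the paper makes in the proof of \cref{lem:exp 4}, just spelled out.
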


\begin{rmk}
	\label{rmk:M3}
	Note that the third dimension subgroup of~\( G \) modulo~\( p \) is \( G^p\gamma_3(G) \) if~\( p \neq 2 \) and \( G^4(G')^2\gamma_3(G) \) if~\( p = 2 \) where~\( \gamma_3(G) \) denotes the third term of the lower central series of~\( G \). 
	Thus, the assumption for a group in \cref{thm:Passi-Sehgal-odd} or \cref{thm:Passi-Sehgal-even} holds if \( G \) has the trivial third modular dimension subgroup.
	Actually, this is how assumption is stated by Passi and Sehgal~\cite[Corollary~7]{PS72}.

	Nowadays more is known.
	A theorem by Sandling~\cite[Theorem~1.2]{San89} provides a positive solution for a finite \( p \)-group of class two with elementary abelian commutator subgroup.
\end{rmk}

A strategy for the proof is the same as \cref{thm:Passi-Sehgal-odd}.
The even prime counterpart of the Ault-Watters theorem is the following theorem by Bovdi~\cite{Bov96}.

\begin{thm}[Bovdi]
	\label{thm:Bovdi}
	Let \( G \) be a finite \( 2 \)-group.
	Suppose \( G \) is of class two and exponent four.
	Then there is a finite quasi-regular \( \FF_2 \)-algebra~\( A \) with \( Q(A) \cong G \).
\end{thm}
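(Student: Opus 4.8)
The plan is to mirror the proof of \cref{thm:Ault-Watters}: I would realize $G$ as the quasi-regular group $Q(A)$ of a suitable finite $\FF_2$-algebra and then appeal to \cref{lem:quasi-regular}. The natural class of algebras to aim for consists of those with $A^3 = 0$, since for such an $\FF_2$-algebra the quasi-multiplication already forces the correct group invariants: in characteristic two $x \circ x = 2x + x^2 = x^2$, whence $x^{\circ 4} = x^2 \circ x^2 = 2x^2 + x^4 = 0$, so $Q(A)$ has exponent dividing four, while $A^3 = 0$ makes $A^2$ central and $Q(A)$ of class at most two. Thus the hypotheses on $G$ are exactly matched by the target, and the task reduces to constructing, for a given $G$ of class two and exponent four, a finite $\FF_2$-algebra $A$ with $A^3 = 0$ and $Q(A) \cong G$.

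Following the odd-prime template, one seeks a central-valued correction map $m \colon G \times G \to \zeta(G)$ and operations $g + h = g\,h\,m(g,h)^{-1}$ and $g \times h = m(g,h)$ on the underlying set of $G$, so that $1 \in G$ is the additive identity and $g \circ h = gh$ recovers the group law, giving a finite quasi-regular ring with $Q(A) \cong G$. The point where the even prime genuinely differs is the passage to an $\FF_2$-algebra, that is, the verification that the additive group has exponent two. Here
\begin{equation*}
	g + g = g^2\, m(g,g)^{-1},
\end{equation*}
and the factor $m(g,g)^{-n(n-1)/2}$ that was automatically trivial for odd $p$ (because $(p-1)/2$ is an integer and $\zeta(G)$ has exponent $p$) no longer disappears when $p = 2$. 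Instead the square $g^2$ must be absorbed directly, and it is precisely at this step that exponent four, rather than exponent two, is used.

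The hard part is therefore to build the correction map compatibly: it must render $+$ and $\times$ associative and distributive, satisfy $m(g,h)\,m(h,g)^{-1} = [g,h]$ so that $+$ is commutative and $\circ$ returns the commutators of $G$, and at the same time encode the squaring map so that the additive exponent collapses to two. In characteristic two the squaring map $g \mapsto g^2$ is a quadratic refinement of the alternating commutator form on $G/\Phi(G)$, and---unlike the odd case, where the Mal'cev--Lazard correspondence trivializes the construction for class two---no such formal device is available at the prime two. Producing a consistent $m$ from this quadratic datum is the technical heart of the argument, and I would take it from the theorem of Bovdi~\cite{Bov96}, just as the construction for \cref{thm:Ault-Watters} was imported from \cite{AW73, GR73}. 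Granting the construction, the remaining points---finiteness, $A^3 = 0$, and that every element is quasi-regular because $Q(A) = (G, \cdot) \cong G$---are routine, and together with the additive exponent-two property they exhibit $A$ as the required finite quasi-regular $\FF_2$-algebra.
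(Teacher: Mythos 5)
Your proposal is correct and takes essentially the same route as the paper: the paper states this result as Bovdi's theorem and simply cites \cite{Bov96} without proof, which is exactly where you place the technical heart of your argument. In fact even less is needed than your sketch suggests---Bovdi's result concerns nilpotent rings of characteristic~$2$, and a finite nilpotent ring of characteristic~$2$ is automatically a finite quasi-regular $\FF_2$-algebra, so no analogue of the modification the paper performs for \cref{thm:Ault-Watters} (verifying that the additive group has exponent~$2$) needs to be supplied on top of the citation.
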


\begin{lem}
	\label{lem:exp 4}
	Every \( 2 \)-group of class two and exponent four is a hereditary group over \( \FF_2 \).
\end{lem}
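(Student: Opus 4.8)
The plan is to mirror the proof of \cref{lem:exp p}, replacing the Ault-Watters theorem with its even-prime analogue \cref{thm:Bovdi}. Recall from the discussion following \cref{dfn:hereditary group} that being a hereditary group is a subgroup-closed property, so to show that a given $2$-group $G$ of class two and exponent four is hereditary over~$\FF_2$ it suffices to verify that \( [K] \in S(\FF_2) \) for every subgroup~\( K \) of~\( G \).

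First I would classify the subgroups that can occur. Any subgroup \( K \le G \) has class at most two and exponent dividing four, hence exponent \( 1 \), \( 2 \), or \( 4 \). If \( K \) has exponent at most two then \( K \) is abelian, and an abelian \( 2 \)-group of any exponent satisfies \( [K] \in S(\FF_2) \) by \cref{lem:abelian p-groups}. The only remaining possibility is that \( K \) is nonabelian, which forces \( K \) to have class exactly two and exponent four. Consequently it is enough to prove the single universal statement that \( [G] \in S(\FF_2) \) for every finite \( 2 \)-group~\( G \) of class two and exponent four; this simultaneously settles \( G \) itself and all of its subgroups of that type.

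To establish this statement I would invoke \cref{thm:Bovdi}: for such a~\( G \) there is a finite quasi-regular \( \FF_2 \)-algebra~\( A \) with \( Q(A) \cong G \). Then \cref{lem:quasi-regular} gives \( [Q(A)] \in S(\FF_2) \), whence \( [G] \in S(\FF_2) \). Combining the two cases shows \( [K] \in S(\FF_2) \) for every subgroup~\( K \) of~\( G \), so \( G \) is a hereditary group over~\( \FF_2 \).

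The substantive content of this argument is entirely concentrated in \cref{thm:Bovdi}, whose proof (drawn from Bovdi's work rather than reproved here) is the genuine obstacle; the reduction itself is routine once that construction is in hand, the only point requiring a moment's care being the observation that exponent-two subgroups are automatically abelian and therefore fall under \cref{lem:abelian p-groups}.
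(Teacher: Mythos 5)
Your proposal is correct and follows essentially the same route as the paper: reduce to the abelian case via \cref{lem:abelian p-groups}, then handle the nonabelian subgroups (necessarily of class two and exponent four, since exponent at most two forces abelian) by combining \cref{thm:Bovdi} with \cref{lem:quasi-regular}. The only difference is that you spell out the case analysis of subgroups explicitly, which the paper leaves implicit.
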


\begin{proof}
	Since every finite abelian \( 2 \)-groups are hereditary groups over \( \FF_2 \) by \cref{lem:abelian p-groups},
	it suffices to prove that \( [G] \in S(\FF_2) \) for every finite \( 2 \)-group~\( G \) of class two and exponent four.
	Because such \( 2 \)-group~\( G \) is a quasi-regular group of some finite quasi-regular \( \FF_2 \)-algebra by \cref{thm:Bovdi}, \( [G] \in S(\FF_2) \) follows from \cref{lem:quasi-regular}.
\end{proof}
\begin{proof}[Proof of \cref{thm:Passi-Sehgal-even}]
	Since the finite \( 2 \)-group \( G \) of class two and exponent four is a hereditary group over \( \FF_2 \) by \cref{lem:exp 4},
	\( \FF_2G \cong \FF_2H \) implies \( G \cong H \) by \cref{cri:iso}.
\end{proof}


\begin{thebibliography}{99}

\bibitem{AW73}
	\textsc{J.~C.~Ault} and \textsc{J.~F.~Watters},
	`Circle groups of nilpotent rings',
	\textsl{Amer.\ Math.\ Monthly} 80 (1973) 48--52,
	doi:\href{https://doi.org/10.2307/2319260}{10.2307/2319260},
	MR  \href{http://www.ams.org/mathscinet-getitem?mr=316493}{316493},
	Zbl \href{https://zbmath.org/?q=an:0251.16009}{0251.16009}.

\bibitem{Bov96}
	\textsc{A.~A.~Bovdi},
	`On circle groups of nilpotent rings of characteristic $2$',
	\textsl{Period.\ Math.\ Hungar.} 32 (1996) 31--34,
	doi:\href{https://doi.org/10.1007/BF01879729}{10.1007/BF01879729},
	MR  \href{http://www.ams.org/mathscinet-getitem?mr=1407906}{1407906},
	Zbl \href{https://zbmath.org/?q=an:0858.16028}{0858.16028}.

\bibitem{Col64}
	\textsc{D.~B.~Coleman},
	`On the modular group ring of a $p$-group',
	\textsl{Proc.\ Amer.\ Math.\ Soc.} 15 (1964) 511--514,
	doi:\href{https://doi.org/10.2307/2034735}{10.2307/2034735},
	MR  \href{http://www.ams.org/mathscinet-getitem?mr=165015}{165015},
	Zbl \href{https://zbmath.org/?q=an:0132.27501}{0132.27501}.

\bibitem{DO14}
	\textsc{C.~Davis} and \textsc{T.~Occhipinti},
	`Which finite simple groups are unit groups?',
	\textsl{J.\ Pure and Appl.\ Algebra} 218 (2014) 743--744,
	doi:\href{https://doi.org/10.1016/j.jpaa.2013.08.013}{10.1016/j.jpaa.2013.08.013},
	MR  \href{http://www.ams.org/mathscinet-getitem?mr=3133704}{3133704},
	Zbl \href{https://zbmath.org/?q=an:1290.16038}{1290.16038}.

\bibitem{Des56}
	\textsc{W.~E.~Deskins},
	`Finite abelian groups with isomorphic group algebras',
	\textsl{Duke Math.\ J.} 23  (1956) 35--40,
	doi:\href{https://doi.org/10.1215/S0012-7094-56-02304-3}{10.1215/S0012-7094-56-02304-3},
	MR  \href{http://www.ams.org/mathscinet-getitem?mr=77535}{77535},
	Zbl \href{https://zbmath.org/?q=an:0075.23905}{0075.23905}.

\bibitem{EK11}
	\textsc{B.~Eick} and \textsc{A.~Konovalov},
	`The modular isomorphism problem for the groups of order 512',
	\textsl{Groups {S}t {A}ndrews 2009 in {B}ath.\ {V}ol.~2}
	(Cambridge University Press, Cambridge, 2011) 375--383,
	doi:\href{https://doi.org/10.1017/CBO9780511842474.005}{10.1017/CBO9780511842474.005},
	MR  \href{http://www.ams.org/mathscinet-getitem?mr=2858869}{2858869},
	Zbl \href{https://zbmath.org/?q=an:1231.20002}{1231.20002}.

\bibitem{GR73}
	\textsc{R.~Gilmer} and \textsc{D.~Roselle},
	`Complements and comments',
	\textsl{Amer.\ Math.\ Monthly} 80 (1973) 1116--1118,
	doi:\href{https://doi.org/10.2307/2318547}{10.2307/2318547},
	MR  \href{http://www.ams.org/mathscinet-getitem?mr=327413}{327413}.

\bibitem{HS06}
	\textsc{M.~Hertweck} and \textsc{M.~Soriano}
	`On the modular isomorphism problem: groups of order $2^6$',
	\textsl{Groups, rings and algebras}, Contemp.\ Math.\ 420
	(American Mathematical Society, Providence, RI, 2006) 177--213,
	doi:\href{https://doi.org/10.1090/conm/420/07976}{10.1090/conm/420/07976},
	MR  \href{http://www.ams.org/mathscinet-getitem?mr=2279240}{2279240},
	Zbl \href{https://zbmath.org/?q=an:1120.20005}{1120.20005}.

\bibitem{Jan70}
	\textsc{G.~J.~Janusz},
	`Faithful representations of $p$ groups at characteristic $p$, {I}',
	\textsl{J.\ Algebra} 15 (1970) 335--351,
	doi:\href{https://doi.org/10.1016/0021-8693(70)90063-3}{10.1016/0021-8693(70)90063-3},
	MR  \href{http://www.ams.org/mathscinet-getitem?mr=265482}{265482},
	Zbl \href{https://zbmath.org/?q=an:0197.02203}{0197.02203}.

\bibitem{KP69}
	\textsc{R.~L.~Kruse} and \textsc{D.~T.~Price},
	\textsl{Nilpotent rings}
	(Gordon and Breach Science Publishers, New York, 1969),
	MR  \href{http://www.ams.org/mathscinet-getitem?mr=266956}{266956},
	Zbl \href{https://zbmath.org/?q=an:0198.36102}{0198.36102}.

\bibitem{Lov67}
	\textsc{L.~Lov\'asz},
	`Operations with structures',
	\textsl{Acta Math.\ Acad.\ Sci.\ Hung.} 18 (1967) 321--328,
	doi:\href{https://doi.org/10.1007/BF02280291}{10.1007/BF02280291},
	MR  \href{http://www.ams.org/mathscinet-getitem?mr=214529}{214529},
	Zbl \href{https://zbmath.org/?q=an:0174.01401}{0174.01401}.

\bibitem{Lov72}
	\textsc{L.~Lov\'asz},
	`Direct product in locally finite categories',
	\textsl{Acta Sci.\ Math. \textup{(}Szeged\textup{)}} 33 (1972) 319--322,
	MR  \href{http://www.ams.org/mathscinet-getitem?mr=327865}{327865},
	Zbl \href{https://zbmath.org/?q=an:0251.18004}{0251.18004}.

\bibitem{PS72}
	\textsc{I.~B.~S.~Passi} and \textsc{S.~K.~Sehgal},
	`Isomorphism of modular group algebras',
	\textsl{Math.\ Z.} 129 (1972) 65--73,
	doi:\href{https://doi.org/10.1007/BF01229543}{10.1007/BF01229543},
	MR  \href{http://www.ams.org/mathscinet-getitem?mr=311752}{311752},
	Zbl \href{https://zbmath.org/?q=an:0234.20003}{0234.20003}.

\bibitem{Pas77}
	\textsc{D.~S.~Passman},
	\textsl{The algebraic structure of group rings}, Pure and Applied Mathematics
	(John Wiley \& Sons, New York, 1977),
	MR  \href{http://www.ams.org/mathscinet-getitem?mr=470211}{470211},
	Zbl \href{https://zbmath.org/?q=an:0368.16003}{0368.16003}.

\bibitem{PMS02}
	\textsc{C.~Polcino~Milies} and \textsc{S.~K.~Sehgal},
	\textsl{An introduction to group rings}, Algebra and Applications 1
	(Kluwer Academic Publishers, Dordrecht, 2002),
	MR  \href{http://www.ams.org/mathscinet-getitem?mr=1896125}{1896125},
	Zbl \href{https://zbmath.org/?q=an:0997.20003}{0997.20003}.

\bibitem{Ros94}
	\textsc{J.~Rosenberg},
	\textsl{Algebraic $K$-theory and its applications}, Graduate Texts in Mathematics 147
	(Springer-Verlag, New York, 1994),
	doi:\href{https://doi.org/10.1007/978-1-4612-4314-4}{10.1007/978-1-4612-4314-4},
	MR  \href{http://www.ams.org/mathscinet-getitem?mr=1282290}{1282290},
	Zbl \href{https://zbmath.org/?q=an:0801.19001}{0801.19001}.

\bibitem{Row08}
	\textsc{L.~H.~Rowen},
	\textsl{Graduate algebra: noncommutative view}, Graduate Studies in Mathematics 91
	(American Mathematical Society, Providence, RI, 2008),
	doi:\href{https://doi.org/10.1090/gsm/091}{10.1090/gsm/091},
	MR  \href{http://www.ams.org/mathscinet-getitem?mr=2462400}{2462400},
	Zbl \href{https://zbmath.org/?q=an:1182.16001}{1182.16001}.

\bibitem{San74}
	\textsc{R.~Sandling},
	`Group rings of circle and unit groups',
	\textsl{Math.\ Z.} 140 (1974) 195--202,
	doi:\href{https://doi.org/10.1007/BF01214162}{10.1007/BF01214162},
	MR  \href{http://www.ams.org/mathscinet-getitem?mr=382332}{382332},
	Zbl \href{https://zbmath.org/?q=an:0281.20004}{0281.20004}.

\bibitem{San84}
	\textsc{R.~Sandling},
	`Units in the modular group algebra of a finite {A}belian $p$-group'
	\textsl{J.\ Pure Appl.\ Algebra} 33 (1984) 337--346,
	doi:\href{https://doi.org/10.1016/0022-4049(84)90066-5}{10.1016/0022-4049(84)90066-5},
	MR  \href{http://www.ams.org/mathscinet-getitem?mr=761637}{761637},
	Zbl \href{https://zbmath.org/?q=an:0543.20008}{0543.20008}.

\bibitem{San85}
	\textsc{R.~Sandling},
	`The isomorphism problem for group rings: a survey',
	\textsl{Orders and their applications}, Lecture Notes in Math.\ 1142
	(Springer, Berlin, 1985) 256--288,
	doi:\href{https://doi.org/10.1007/BFb0074806}{10.1007/BFb0074806},
	MR  \href{http://www.ams.org/mathscinet-getitem?mr=812504}{812504},
	Zbl \href{https://zbmath.org/?q=an:0565.20005}{0565.20005}.

\bibitem{San89}
	\textsc{R.~Sandling},
	`The modular group algebra of a central-elementary-by-abelian $p$-group',
	\textsl{Arch.\ Math.\ \textup{(}Basel\textup{)}} 52 (1989) 22--27,
	doi:\href{https://doi.org/10.1007/BF01197966}{10.1007/BF01197966},
	MR  \href{http://www.ams.org/mathscinet-getitem?mr=980047}{980047},
	Zbl \href{https://zbmath.org/?q=an:0632.16011}{0632.16011}.

\bibitem{Seh78}
	\textsc{S.~K.~Sehgal},
	\textsl{Topics in group rings}, Monographs and Textbooks in Pure and Applied Math.\ 50.
	(Marcel Dekker, Inc., New York, 1978),
	MR  \href{http://www.ams.org/mathscinet-getitem?mr=508515}{508515},
	Zbl \href{https://zbmath.org/?q=an:0411.16004}{0411.16004}.

\bibitem{Seh03}
	\textsc{S.~K.~Sehgal},
	`Group rings',
	\textsl{Handbook of Algebra, {V}ol.~3}
	(Elsevier/North-Holland, Amsterdam, 2003) 457--541,
	doi:\href{https://doi.org/10.1016/S1570-7954(03)80069-4}{10.1016/S1570-7954(03)80069-4},
	MR  \href{http://www.ams.org/mathscinet-getitem?mr=2035104}{2035104},
	Zbl \href{https://zbmath.org/?q=an:1078.16022}{1078.16022}.

\end{thebibliography}
\end{document}